\colorlet{mdtRed}{red!50!black}
\definecolor{dblue}{rgb}{0,0,.6}
\DeclareMathOperator{\End}{\textnormal{End}}
\DeclareMathOperator{\ad}{\textnormal{ad}}
\DeclareMathOperator{\Pic}{\textnormal{Pic}}
\DeclareMathOperator{\Higgs}{\textnormal{Higgs}}
\DeclareMathOperator{\Lie}{\textnormal{Lie}}
\newtheorem{theorem}{Theorem}[section]
\newtheorem{lemma}[theorem]{Lemma} 
\newtheorem{proposition}[theorem]{Proposition}
\newtheorem{corollary}[theorem]{Corollary}
\theoremstyle{definition}
\newtheorem{definition}[theorem]{Definition}
\numberwithin{equation}{section} 
\begin{document}

\baselineskip=15.5pt 

\title[Picard group and fundamental group of moduli spaces]{Picard group 
and fundamental group of the moduli of Higgs bundles on curves}

\author{Sujoy Chakraborty} 
\address{School of Mathematics, Tata Institute of Fundamental Research, 
Homi Bhabha Road, Colaba, Mumbai 400005, India.}
\email{sujoy@math.tifr.res.in}

\author{Arjun Paul} 
\address{School of Mathematics, Tata Institute of Fundamental Research, 
Homi Bhabha Road, Colaba, Mumbai 400005, India.}
\email{apmath90@math.tifr.res.in}

\subjclass[2010]{14D23, 14D20, 14H30, 14C22} 

\keywords{Moduli space; principal Higgs bundle; fundamental group and Picard group.} 


\begin{abstract} 
	Let $X$ be an irreducible smooth projective curve of genus $g \geq 2$ over $\mathbb{C}$. Let $G$ be a 
	connected reductive affine algebraic group over $\mathbb{C}$. Let $\mathrm{M}_{G, {\rm Higgs}}^{\delta}$ 
	be the moduli space of semistable principal $G$--Higgs bundles on $X$ of topological type 
	$\delta \in \pi_1(G)$. In this article, we compute the fundamental group and Picard group of 
	$\mathrm{M}_{G, {\rm Higgs}}^{\delta}$. 
\end{abstract}

\maketitle 

\section{Introduction}
Let $X$ be an irreducible smooth projective curve of genus $g \geq 2$ over $\mathbb{C}$. Let $G$ be a connected 
reductive affine algebraic group over $\mathbb{C}$. The topological types of holomorphic principal $G$--bundles 
on $X$ are parametrized by $\pi_1(G)$. 
Let $\mathrm{M}_G^{\delta}$ be the moduli space of semistable holomorphic principal $G$--bundles on $X$ of 
topological type $\delta \in \pi_1(G)$. This is an irreducible normal complex projective variety (generally non-smooth) of 
dimension $(g-1)\cdot\dim(G) + \dim(Z(G))$, where $Z(G)$ is the center of the group $G$. Geometry of moduli spaces 
of bundles over projective curves is an important topic to study in algebraic geometry. Picard group of 
$\mathrm{M}_G^{\delta}$ was studied in \cite{KN} for simply connected semisimple complex affine algebraic groups. 
Later A. Beauville, Y. Laszlo and C. Sorger studied the Picard group of $\mathrm{M}_G^{\delta}$ case by case for 
almost all classical semisimple complex affine algebraic groups (see \cite{BLS}). The case of reductive groups was 
studied in \cite{BH} for moduli stacks. 
The fundamental group of moduli space (and stack) of $G$--bundles was studied in \cite{BMP}. The case of 
moduli of $G$--Higgs bundles was open. In this article we study the fundamental group and Picard group of 
the moduli spaces of semistable $G$--Higgs bundles over $X$. 

Topological type of a holomorphic principal $G$--Higgs bundle on $X$ is 
defined by the topological type of the underlying principal $G$--bundle on $X$. Let 
$\mathrm{M}_{G, \Higgs}^{\delta}$ be the moduli space of semistable holomorphic principal $G$--Higgs bundles on 
$X$ of topological type $\delta \in \pi_1(G)$. The space $\mathrm{M}_{G, \Higgs}^{\delta}$ is nonempty and 
connected, for all $\delta \in \pi_1(G)$, \cite[Theorem 1.1, p.~791]{GO}. 
Let  $\mathrm{M}_G^{\delta}$ be the  moduli space of semistable principal $G$--bundles on $X$ of topological 
type $\delta \in \pi_1(G)$. For any $\mathbb{C}$--scheme $Z$, we denote by $\Pic(Z)$ (resp., $\pi_1(Z)$) 
the Picard group (resp., fundamental group) of $Z$. 

Assume that either $g \geq 3$, or there is no homomorphism of $G$ onto $\mathrm{PSL}_2(\mathbb{C})$ whenever $g = 2$.
Then we have the following 
\begin{theorem}
	There are isomorphisms 
	$\Pic(\mathrm{M}_{G, \Higgs}^{\delta}) \cong \Pic(\mathrm{M}_{G}^{\delta})$ and 
	$\pi_1(\mathrm{M}_{G, \Higgs}^{\delta}) \cong \pi_1(\mathrm{M}_{G}^{\delta})$. 
\end{theorem}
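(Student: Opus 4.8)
The plan is to realise a big open subset of $\mathrm{M}_{G,\Higgs}^{\delta}$ as the total space of the cotangent bundle of the smooth locus of $\mathrm{M}_{G}^{\delta}$, and then to transport both invariants across the complementary loci by codimension--two arguments. Write $\mathrm{M}_{G}^{\delta,rs} \subset \mathrm{M}_{G}^{\delta}$ for the regularly stable locus, i.e. the locus of stable bundles whose automorphism group equals the centre $Z(G)$; this is exactly the smooth locus of $\mathrm{M}_{G}^{\delta}$, and it is open and dense. Under the standing hypothesis on $g$ (in particular the absence of a surjection $G \twoheadrightarrow \mathrm{PSL}_2(\mathbb{C})$ when $g = 2$) one has $\mathrm{codim}\bigl(\mathrm{M}_{G}^{\delta} \setminus \mathrm{M}_{G}^{\delta,rs}\bigr) \geq 2$; this is the same input used for $\mathrm{M}_{G}^{\delta}$ in \cite{BMP}. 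For a regularly stable $E$ the space of Higgs fields is $H^0(X, \mathrm{ad}(E) \otimes K_X)$, which by Serre duality together with the self-duality of $\mathrm{ad}(E)$ (via a nondegenerate invariant form on $\mathfrak{g}$) is canonically the cotangent space $T^{*}_{[E]}\mathrm{M}_{G}^{\delta}$. Hence the forgetful rational map $(E,\phi) \mapsto [E]$, which is defined over the locus of Higgs bundles with semistable underlying bundle, restricts to a morphism over $\mathrm{M}_{G}^{\delta,rs}$ whose preimage $\mathcal{U}$ is the total space of the cotangent bundle $T^{*}\mathrm{M}_{G}^{\delta,rs}$.

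Granting this description, the theorem assembles from three chains of isomorphisms. First, since $T^{*}\mathrm{M}_{G}^{\delta,rs} \to \mathrm{M}_{G}^{\delta,rs}$ is a vector bundle over a smooth variety, its projection is a homotopy equivalence and pullback of line bundles is an isomorphism, so $\pi_1(\mathcal{U}) \cong \pi_1(\mathrm{M}_{G}^{\delta,rs})$ and $\Pic(\mathcal{U}) \cong \Pic(\mathrm{M}_{G}^{\delta,rs})$. Second, the codimension bound on the base, together with the normality and local factoriality of $\mathrm{M}_{G}^{\delta}$, gives $\pi_1(\mathrm{M}_{G}^{\delta,rs}) \cong \pi_1(\mathrm{M}_{G}^{\delta})$ and $\Pic(\mathrm{M}_{G}^{\delta,rs}) \cong \Pic(\mathrm{M}_{G}^{\delta})$. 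Third, once one shows $\mathrm{codim}\bigl(\mathrm{M}_{G,\Higgs}^{\delta} \setminus \mathcal{U}\bigr) \geq 2$, the normality and local factoriality of $\mathrm{M}_{G,\Higgs}^{\delta}$ give $\pi_1(\mathcal{U}) \cong \pi_1(\mathrm{M}_{G,\Higgs}^{\delta})$ and $\Pic(\mathcal{U}) \cong \Pic(\mathrm{M}_{G,\Higgs}^{\delta})$. Composing these produces the asserted isomorphisms.

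The crux, and the step I expect to be the main obstacle, is the estimate $\mathrm{codim}\bigl(\mathrm{M}_{G,\Higgs}^{\delta} \setminus \mathcal{U}\bigr) \geq 2$, where $\dim \mathrm{M}_{G,\Higgs}^{\delta} = 2\dim \mathrm{M}_{G}^{\delta}$. The complement breaks into two pieces. The first consists of pairs $(E,\phi)$ with $E$ semistable but not regularly stable; these lie over $\mathrm{M}_{G}^{\delta} \setminus \mathrm{M}_{G}^{\delta,rs}$, and since every $\phi \in H^0(X,\mathrm{ad}(E)\otimes K_X)$ keeps a semistable $E$ Higgs-semistable, I would bound this piece by combining the codimension of each automorphism stratum in the base with the fibre dimension $h^0(\mathrm{ad}(E)\otimes K_X) = \dim \mathrm{M}_{G}^{\delta} + h^0(\mathrm{ad}(E)) - \dim Z(G)$ computed from Riemann--Roch, checking that the jump of $h^0(\mathrm{ad}(E))$ is dominated by the codimension of the stratum. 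The second, genuinely new, piece consists of pairs whose underlying bundle is \emph{unstable}; here $\phi$ cannot preserve the canonical (Harder--Narasimhan) parabolic reduction of $E$, for otherwise $(E,\phi)$ would itself be Higgs-unstable. I would stratify by the Harder--Narasimhan type $\tau$, and for each $\tau$ bound the dimension of the family of such $E$ together with the Higgs fields $\phi$ compatible with Higgs-semistability; the constraint that $\phi$ must carry lower pieces of the filtration strictly upward (twisted by $K_X$) is precisely what forces the dimension of each stratum to fall at least two below $2\dim \mathrm{M}_{G}^{\delta}$. Establishing these dimension counts uniformly in $G$ and in $\tau$, under the genus hypothesis, is the technical heart of the argument.
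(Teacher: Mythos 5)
Your overall architecture is exactly the paper's: realize a big open subset of $\mathrm{M}_{G,\Higgs}^{\delta}$ as the total space of the cotangent bundle of the (regularly) stable locus of $\mathrm{M}_{G}^{\delta}$ (Proposition \ref{prop-1}), transfer $\pi_1$ and $\Pic$ along the vector-bundle projection (Lemmas \ref{lem-1} and \ref{lem-2}), and move across the various codimension-two complements (Lemma \ref{lem-3}, used in Theorem \ref{thm-1} and Corollary \ref{cor-1}). Your three chains of isomorphisms are precisely the paper's \eqref{eq-2}--\eqref{eq-7}, and your identification of the fibre over a stable bundle $E$ with $H^0(X,\ad(E)\otimes K_X) \cong T^{*}_{[E]}\mathrm{M}_{G}^{\delta}$ via Serre duality and self-duality of $\ad(E)$ is the same computation as in the proof of Proposition \ref{prop-1}.

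The one substantive divergence is the step you correctly single out as the crux: the bound $\mathrm{codim}\bigl(\mathrm{M}_{G,\Higgs}^{\delta}\setminus\mathcal{U}\bigr)\geq 2$. You propose to establish it by stratifying the locus of non-regularly-stable underlying bundles by automorphism type and the unstable locus by Harder--Narasimhan type, with dimension counts uniform in $G$ and in the HN type $\tau$ --- but you do not carry these counts out, so as written this is a genuine gap, and a delicate one: it is exactly here that the standing hypothesis (genus $\geq 3$, or no surjection of $G$ onto $\mathrm{PSL}_2(\mathbb{C})$ when $g=2$) must enter, and without it the bound fails, so any stratum-by-stratum estimate has to see that hypothesis explicitly. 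The paper does not prove this estimate either; it quotes Faltings \cite[Theorem II.6 (iii)]{F}, which supplies both codimension bounds needed (for the complement of $\phi(T^{*}\mathrm{M}_{G}^{s,\delta})$ inside $\mathrm{M}_{G,\Higgs}^{s,\delta}$, and for the strictly semistable Higgs locus used in Corollary \ref{cor-1}), so you can close your gap by citation rather than by completing the HN analysis. Two smaller points in your favour: working with the regularly stable locus, and explicitly invoking normality plus local factoriality when restricting $\Pic$ across codimension two on the possibly singular spaces $\mathrm{M}_{G}^{\delta}$ and $\mathrm{M}_{G,\Higgs}^{\delta}$, is more careful than the paper, whose Lemma \ref{lem-3} is stated for smooth $Z$ but is applied to these singular moduli spaces (injectivity of restriction needs only normality; surjectivity requires extending line bundles, which is where factoriality or an equivalent input must be checked).
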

Therefore, it follows from \cite[Theorem 1.1]{BMP} that 
$\pi_1(\mathrm{M}_{G, \Higgs}^{\delta}) \cong \mathbb{Z}^{2gd}$, where $d = \dim Z(G)$ is the dimension 
of the center of $G$. In particular, $\mathrm{M}_{G, \Higgs}^{\delta}$ is simply connected, whenever 
$G$ is connected and semisimple. Therefore, together with the results of \cite{KN, BLS}, the above theorem 
determines Picard group of $\mathrm{M}_{G, \Higgs}^{\delta}$ for essentially all classical semisimple 
complex affine algebraic groups. 

We further remark that, our method of determining the fundamental group and Picard group generalize to the 
case of moduli stack of $G$--Higgs bundles over $X$. 

\section{Preliminaries} 
\subsection{Some results on locally trivial fibrations} 
Let $Z$ be a smooth quasi-projective variety over $\mathbb{C}$. We first recall some well-known 
results related to fundamental groups and Picard groups of vector bundles over $Z$. 
Let $p : E \longrightarrow Z$ be a vector bundle of finite rank over $Z$. 
\begin{lemma}\label{lem-1}
	The natural homomorphism $p^* : \Pic(Z) \longrightarrow \Pic(E)$ is an isomorphism. 
\end{lemma}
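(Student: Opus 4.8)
The plan is to establish injectivity and surjectivity of $p^*$ separately. For injectivity I would use the zero section $s\colon Z \longrightarrow E$ of the vector bundle, which satisfies $p \circ s = \mathrm{id}_Z$. Functoriality of pullback then gives $s^* \circ p^* = \mathrm{id}$ on $\Pic(Z)$, so $p^*$ is a split injection. This part is immediate and uses nothing beyond the vector bundle structure.

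The substance lies in surjectivity. Since $Z$ is smooth, so is the total space $E$, and hence $\Pic(E)$ is identified with the Weil divisor class group $\mathrm{Cl}(E)$, and likewise for $Z$. Fix a line bundle $L$ on $E$. The key observation is that the restriction of $L$ to the generic fibre of $p$ is trivial: this generic fibre is $\mathbb{A}^r$ over the function field $K = \mathbb{C}(Z)$, and $\Pic(\mathbb{A}^r_K) = 0$. I would then spread this out. Because the Picard group is compatible with the filtered inverse limit of the open subschemes $p^{-1}(U)$, taken over dense open $U \subseteq Z$, the triviality of $L$ on the generic fibre forces $L|_{p^{-1}(U)}$ to be trivial for some dense open $U \subseteq Z$. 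Writing $D = Z \setminus U$, a closed subset of codimension $\geq 1$, flatness of $p$ with integral equidimensional fibres ensures that every codimension-one component of $p^{-1}(D)$ is of the form $p^{-1}(D_i)$ for a codimension-one component $D_i$ of $D$.

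Finally, I would invoke the excision (localization) exact sequence for class groups on the smooth variety $E$:
\[
\bigoplus_i \mathbb{Z}\cdot[p^{-1}(D_i)] \longrightarrow \mathrm{Cl}(E) \longrightarrow \mathrm{Cl}(p^{-1}(U)) \longrightarrow 0.
\]
Since $L$ restricts to the trivial class in $\mathrm{Cl}(p^{-1}(U))$, it lies in the image of the left-hand map. But each flat pullback satisfies $[p^{-1}(D_i)] = p^*[D_i]$, so $L \in p^*\,\mathrm{Cl}(Z) = p^*\Pic(Z)$. This proves surjectivity, and together with the first paragraph yields the claimed isomorphism.

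The main obstacle I anticipate is the spreading-out step: one must justify carefully that triviality on the generic fibre propagates to triviality over a genuine dense open $p^{-1}(U)$, which rests on the compatibility of $\Pic$ with the relevant limit of schemes, and on the control of codimensions of $p^{-1}(D)$ afforded by flatness. An alternative route that sidesteps the limit argument is to pass to the projective completion $\overline{E} = \mathbb{P}(E \oplus \mathcal{O}_Z)$, use the projective bundle formula $\Pic(\overline{E}) \cong p^*\Pic(Z) \oplus \mathbb{Z}$, and then apply the localization sequence for the divisor $\overline{E} \setminus E$ at infinity; this trades the limit argument for the projective bundle formula as the principal input.
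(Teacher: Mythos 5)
Your proof is correct, but it follows a genuinely different and considerably more careful route than the paper's. The paper's entire argument is the assertion of an exact sequence $\Pic(Z) \stackrel{p^*}{\longrightarrow} \Pic(E) \longrightarrow \Pic(\mathbb{A}^n_{\mathbb{C}})$, the second map being restriction of a line bundle to a closed fiber, from which surjectivity follows since $\Pic(\mathbb{A}^n_{\mathbb{C}})=0$; injectivity is then dismissed with the remark that $E$ is Zariski-locally trivial over $Z$. But exactness at the middle term --- that a line bundle on $E$ trivial on a fiber is pulled back from $Z$ --- is precisely the substance of the lemma, is not justified in the paper, and genuinely requires a hypothesis such as smoothness (or at least seminormality) of $Z$: homotopy invariance of $\Pic$ fails, e.g., for a cuspidal curve. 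Your argument supplies exactly what the paper leaves unproved: you restrict to the \emph{generic} fiber $\mathbb{A}^r_K$ (the right fiber to use), spread the trivialization out to some $p^{-1}(U)$, and then use the excision sequence for $\mathrm{Cl}(E)$ together with flatness to recognize the divisors supported over $Z\setminus U$ as pullbacks $p^*[D_i]$. This is the classical proof of homotopy invariance of the divisor class group (Hartshorne II.6, extended from $Z\times\mathbb{A}^1$ to an arbitrary vector bundle), and it makes explicit where smoothness of $Z$ enters, namely in the identification $\Pic=\mathrm{Cl}$, which the paper's sketch obscures; your zero-section splitting $s^*\circ p^*=\mathrm{id}$ is likewise a cleaner injectivity argument than the paper's appeal to local triviality. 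Two small points to tighten: in the spreading-out step, run the limit over \emph{affine} opens $U\subseteq Z$, so that each $p^{-1}(U)$ is affine (the total space of a vector bundle over an affine base is affine) and the transition maps are affine morphisms, as the limit theorems for $\Pic$ require; and observe that components of $p^{-1}(D)$ of codimension at least $2$ in $E$ simply do not appear in the excision sequence, which legitimizes your reduction to the components $p^{-1}(D_i)$ with $D_i$ of codimension one.
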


\begin{proof}
	Let $n$ be the rank of the vector bundle $E$ over $Z$. Note that the fibers of $p$ are affine $n$--spaces 
	$\mathbb{A}_{\mathbb{C}}^n$. We have a short exact sequence of groups 
	$$\Pic(Z) \stackrel{p^*}{\longrightarrow} \Pic(E) \stackrel{}{\longrightarrow} \Pic(\mathbb{A}_{\mathbb{C}}^n)\,,$$ 
	where the second map is given by the restriction of a line bundle to the fiber $\mathbb{A}_{\mathbb{C}}^n$. 
	Since $\Pic(\mathbb{A}_{\mathbb{C}}^n)$ is trivial, $p^*$ is surjective. 
	To show $p^*$ injective, let $L \in \Pic(Z)$ be such that $p^*L$ is a trivial bundle on $E$. 
	Since $E$ is Zariski locally trivial over $Z$, we must have $L$ is trivial on $Z$. 
\end{proof}

\begin{lemma}\label{lem-2}
	The natural homomorphism $q_* : \pi_1(E) \longrightarrow \pi_1(Z)$ is an isomorphism. 
\end{lemma}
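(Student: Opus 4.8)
The plan is to exhibit $E$ as homotopy equivalent to $Z$ in a way compatible with the projection, so that the induced map on fundamental groups is forced to be an isomorphism. Throughout I would work in the complex-analytic (Euclidean) topology, in which $\pi_1$ and the higher homotopy groups are computed; all the relevant maps are morphisms of varieties and hence continuous in this topology. Write $p : E \longrightarrow Z$ for the bundle projection, so that the map $q_*$ in the statement is $p_* : \pi_1(E) \longrightarrow \pi_1(Z)$.

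First I would introduce the zero section $s_0 : Z \longrightarrow E$, a morphism satisfying $p \circ s_0 = \mathrm{id}_Z$. Fixing a basepoint $z_0 \in Z$ and taking $s_0(z_0)$ as the basepoint of $E$ makes both $p$ and $s_0$ pointed maps. From $p \circ s_0 = \mathrm{id}_Z$ one gets $p_* \circ (s_0)_* = \mathrm{id}$ on $\pi_1(Z)$, so $p_*$ is surjective.

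The key geometric input is scalar multiplication $H : E \times [0,1] \longrightarrow E$, $H(e,t) = t\cdot e$, which is continuous and satisfies $H(\,\cdot\,,1) = \mathrm{id}_E$ and $H(\,\cdot\,,0) = s_0 \circ p$, while fixing the zero section pointwise throughout. Thus $H$ is a basepoint-preserving deformation retraction of $E$ onto the image of $s_0$, giving $s_0 \circ p \simeq \mathrm{id}_E$ and hence $(s_0)_* \circ p_* = \mathrm{id}$ on $\pi_1(E)$. Combined with the previous step, $p_*$ and $(s_0)_*$ are mutually inverse, so $q_* = p_*$ is an isomorphism; the same argument in fact yields isomorphisms on all homotopy groups.

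I expect no serious obstacle here: the only point needing care is to observe that the scaling homotopy $H$ is globally well-defined and continuous even though $E$ is merely locally trivial over $Z$ — but this is automatic, since scalar multiplication is intrinsic to the vector-bundle structure and is independent of any choice of local trivialization. Should one prefer a more structural route, one can instead note that in the analytic topology $p$ is a locally trivial fibration with contractible fiber $\mathbb{A}_{\mathbb{C}}^n \cong \mathbb{C}^n$, and invoke the long exact sequence of homotopy groups, in which the vanishing of $\pi_i(\mathbb{C}^n)$ forces $p_*$ to be an isomorphism; the deformation-retraction argument above is simply a more elementary packaging of this fact.
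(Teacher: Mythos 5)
Your proof is correct, but your primary argument differs from the paper's. The paper proves Lemma~\ref{lem-2} in one stroke by viewing $p$ as a locally trivial fibration (in the analytic topology) with fiber $\mathbb{A}^n_{\mathbb{C}}$ and invoking the exact sequence $\pi_1(\mathbb{A}^n_{\mathbb{C}}) \to \pi_1(E) \stackrel{p_*}{\to} \pi_1(Z) \to \pi_0(\mathbb{A}^n_{\mathbb{C}})$, where contractibility and connectedness of the fiber force $p_*$ to be an isomorphism --- exactly the ``more structural route'' you sketch in your closing paragraph. Your main argument instead exhibits the zero section as a deformation retract via the intrinsic scaling homotopy $H(e,t) = t\cdot e$, making $p$ and $s_0$ mutually inverse on $\pi_1$. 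This buys you something slightly stronger and with fewer prerequisites: you get a genuine homotopy equivalence (hence isomorphisms on all homotopy and homology groups) without needing to know that a Zariski-locally trivial algebraic vector bundle is a Serre fibration in the Euclidean topology, and your observation that the scaling map is globally defined independently of trivializations is the right point to flag. The paper's route is shorter once the fibration machinery is granted, but your packaging is more self-contained; the two are otherwise equivalent in substance, and you correctly resolve the paper's typographical slip of writing $q_*$ for what is really $p_*$.
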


\begin{proof}
	Since the morphism $p$ is locally trivial, we have an exact sequence of homotopy groups 
	$$ \pi_1(\mathbb{A}_{\mathbb{C}}^n) \longrightarrow \pi_1(E) \stackrel{p_*}{\longrightarrow} \pi_1(Z) 
	\longrightarrow \pi_0(\mathbb{A}_{\mathbb{C}}^n)\,.$$ 
	Since the fibers $\mathbb{A}_{\mathbb{C}}^n$ are connected and contractible, we conclude that $p_*$ 
	is an isomorphism. 
\end{proof}

\begin{lemma}\label{lem-3}
	Let $\iota : U \hookrightarrow Z$ be a Zariski open subset of $Z$, whose complement has codimension 
	at least $2$ in $Z$. Then we have isomorphisms $\Pic(Z) \stackrel{\iota^*}{\longrightarrow} \Pic(U)$ 
	and $\pi_1(U) \stackrel{\simeq}{\longrightarrow} \pi_1(Z)$. 
\end{lemma}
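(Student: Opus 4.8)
The plan is to prove the two isomorphisms separately, writing $Y := Z \setminus U$ for the removed closed subset, which by hypothesis has codimension at least $2$ in $Z$, and exploiting throughout that $Z$ is smooth.

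For the Picard groups, I would first reduce to divisor class groups. Since $Z$ (and hence $U$) is smooth, it is locally factorial, so the natural maps $\Pic(Z) \to \mathrm{Cl}(Z)$ and $\Pic(U) \to \mathrm{Cl}(U)$ are isomorphisms, compatible with restriction along $\iota$. It then suffices to show that $\iota^* : \mathrm{Cl}(Z) \to \mathrm{Cl}(U)$ is an isomorphism. This is precisely the standard excision statement for class groups: the restriction map is always surjective, and its kernel is generated by the classes of the codimension-one irreducible components of $Y$. As $Y$ has codimension $\geq 2$, it has no such components, so the kernel is trivial and $\iota^*$ is an isomorphism. Transporting back gives $\iota^* : \Pic(Z) \xrightarrow{\sim} \Pic(U)$.

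For the fundamental groups, the essential observation is that complex codimension $\geq 2$ means real codimension $\geq 4$ for $Y$ inside the complex manifold $Z$. I would argue by general position. A loop in $Z$ is a map from a source of real dimension $1$, and a homotopy of loops is a map from a source of real dimension $2$; since $4 > 1+1$ and $4 > 2+1$, a generic perturbation---rel boundary in the case of a homotopy---pushes such a map off $Y$ and into $U$. Pushing loops off $Y$ shows that $\iota_* : \pi_1(U) \to \pi_1(Z)$ is surjective, and pushing homotopies off $Y$ shows it is injective; hence $\iota_*$ is an isomorphism. Equivalently, one packages this as the assertion that the pair $(Z, U)$ is $3$-connected, whence the long exact homotopy sequence of the pair yields $\pi_i(U) \xrightarrow{\sim} \pi_i(Z)$ for all $i \leq 2$, in particular for $i = 1$.

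The step I expect to require the most care is the transversality argument, since $Y$ need not be smooth and so one cannot directly invoke smooth transversality against a single submanifold. I would resolve this by fixing a Whitney stratification of $Y$ into finitely many smooth locally closed strata, each still of real codimension $\geq 4$, and perturbing the map off the strata one at a time in order of increasing dimension; alternatively I would simply cite the classical fact that deleting a closed analytic subset of complex codimension at least $2$ from a complex manifold leaves $\pi_0$ and $\pi_1$ unchanged. By contrast, the Picard-group half is routine once the identification with class groups is in place.
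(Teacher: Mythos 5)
Your proposal is correct, but it is a genuinely different route from the paper's: the paper does not argue at all, it simply cites \cite[Proposition 1.6, p.~126]{Ha2} for the Picard statement and \cite[p.~42]{Mi} for the fundamental group, whereas you reconstruct self-contained proofs of the facts behind such citations. For the Picard half, your reduction $\Pic \cong \mathrm{Cl}$ followed by excision for divisor class groups is the classical argument; note that Hartshorne's reflexive-sheaves result cited in the paper proceeds slightly differently (a line bundle on $U$ extends uniquely as a reflexive rank-one sheaf on $Z$, and local factoriality of the smooth $Z$ forces the extension to be invertible), but both routes hinge on exactly the same smoothness hypothesis, which is legitimately available here since the section's standing assumption is that $Z$ is smooth quasi-projective; your version additionally uses that $Z$ is integral for class-group theory, which holds in all the paper's applications ($T^*\mathrm{M}_G^{s,\delta}$ inside $\mathrm{M}_{G,\Higgs}^{s,\delta}$, and the stable loci inside the full moduli spaces, all irreducible). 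For the homotopy half, your stratified transversality argument is the standard proof of the cited fact, and you correctly anticipate the one genuine pitfall --- that $Y$ need not be smooth --- resolving it with a Whitney stratification and induction over strata; the only detail worth adding is that continuous loops and homotopies should first be smoothed (or one works with a CW/simplicial approximation) before general position applies, and your connectivity count (real codimension $\geq 4$ gives a $3$-connected pair, hence $\pi_i(U) \xrightarrow{\ \sim\ } \pi_i(Z)$ for $i \leq 2$) is exactly right. What your approach buys is a proof readable without chasing references and visibly isolating where smoothness is used; what the paper's approach buys is brevity and, in the Picard case, a statement (Hartshorne's) formulated for normal varieties and reflexive sheaves, hence applicable beyond the locally factorial setting if one weakens the conclusion accordingly.
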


\begin{proof}
	The first isomorphism follows from \cite[Proposition 1.6, p.~126]{Ha2}, and for the second isomorphism, 
	see \cite[p.~42]{Mi}. 
\end{proof}

\subsection{Principal $G$--Higgs bundles}
Let $Y$ be an irreducible smooth projective variety over $\mathbb{C}$. Let $\Omega_Y^1$ be the sheaf of 
differential $1$-forms on $Y$. For any $i \geq 1$, let $\Omega_Y^i = \wedge^i\Omega_Y^1$. Let $G$ be a connected 
reductive affine algebraic group over $k$. Let $\mathfrak{g} = \Lie(G)$ be the Lie algebra of $G$. The adjoint 
action of $G$ on its Lie algebra $\mathfrak{g}$ is denoted by 
$$\ad : G \longrightarrow \End(\mathfrak{g})\,.$$ 

Let $p : E_G \longrightarrow Y$ be a principal $G$--bundle on $Y$. The right $G$--action on $E_G$ and the adjoint 
action of $G$ on $\mathfrak{g}$ produces a $G$--action on $E_G\times \mathfrak{g}$ defined by 
$$(z, \xi)\cdot g = (z\cdot g, \ad(g^{-1})(\xi))\,, \,\, \forall\, (z, \xi) \in G\times\mathfrak{g}\,,\, g \in G\,.$$ 
Then the associated quotient $E_G\times^G \mathfrak{g} := (E_G\times\mathfrak{g})/G$ is a vector bundle $\ad(E_G)$ on 
$Y$, called the \textit{adjoint vector bundle} of $E_G$. A \textit{Higgs field} on $E_G$ is a section 
$\theta \in H^0(Y, \ad(E_G)\otimes\Omega_Y^1)$ such that $\theta\wedge\theta = 0$ in $H^0(Y, \ad(E_G)\otimes\Omega_Y^2)$. 
A \textit{principal $G$--Higgs bundle} on $Y$ is a pair $(E_G, \theta)$ consisting of a principal $G$--bundle 
$E_G$ on $Y$ and a Higgs field $\theta$ on $E_G$. 

\begin{definition}
	A principal $G$--Higgs bundle $(E_G, \theta)$ on $Y$ is said to be \textit{semistable} (respectively, \textit{stable}) 
	if for any reduction $\sigma : U \longrightarrow E_G/P$ of the structure group of $E_G$ to a proper parabolic subgroup 
	$P \subset G$ over an open subset $U \subset Y$ whose complement in $Y$ has codimension at least $2$, such that 
	$\theta \in H^0(X, \ad(E_P)\otimes\Omega_Y^1)$, we have 
	$$\deg\left(\sigma^*(T_{\text{rel}})\right) \geq (\text{respectively, } >) 0\,,$$ 
	where $T_{\text{rel}}$ is the relative tangent bundle of the projection $E_G/P \longrightarrow Y$. 
\end{definition}
Note that, any principal $G$--bundle $E_G$ on $Y$ is a principal $G$--Higgs bundle on $Y$ with Higgs field $\theta = 0$. 
Taking $\theta = 0$ in the above definition, we get the definition of semistability and stability of principal 
$G$--bundles $E_G$ on $Y$. 

\section{Fundamental group and Picard group of $\mathrm{M}_{G, \Higgs}^{\delta}$} 
Fix $\delta \in \pi_1(G)$. 
Let $\mathrm{M}_G^{\delta}$ be the moduli space of semistable holomorphic principal $G$--bundles on $X$ of 
topological type $\delta$. This is a normal complex projective variety (generally non-smooth) of 
dimension $(g-1)\cdot\dim(G) + \dim(Z(G))$. This contains a smooth open subvariety 
$\mathrm{M}_G^{s, \delta}$ parametrizing stable principal $G$--bundles on $X$ of topological type $\delta$. 
Let $\mathrm{M}_{G, \Higgs}^{\delta}$ be the moduli space of semistable holomorphic principal $G$--Higgs 
bundles on $X$ of topological type $\delta$, and let $\mathrm{M}_{G, \Higgs}^{s, \delta}$ be the subvariety 
parametrizing the stable principal $G$--Higgs bundles on $X$ of topological type $\delta$. 
Note that $\mathrm{M}_{G, \Higgs}^{s, \delta}$ is a smooth quasi-projective variety over $\mathbb{C}$. 

$(*)$ Assume that either $g = \text{genus}(X) \geq 3$ or there is no nontrivial homomorphism of $G$ 
onto $\mathrm{PGL}(2, \mathbb{C})$. 

\begin{proposition}\label{prop-1}
	There is an open embedding 
	$$\phi : T^*\mathrm{M}_{G}^{s, \delta} \hookrightarrow \mathrm{M}_{G, \Higgs}^{s, \delta}\,.$$ 
	The complement of the image of $\phi$ in $\mathrm{M}_{G, \Higgs}^{s, \delta}$ has codimension at 
	least $2$. 
\end{proposition}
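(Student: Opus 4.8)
The plan is to identify $T^*\mathrm{M}_{G}^{s,\delta}$ with the locus in $\mathrm{M}_{G,\Higgs}^{s,\delta}$ of stable Higgs bundles whose underlying $G$--bundle is already stable, and then to bound the codimension of the complement by stratifying the unstable and strictly semistable loci.

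\textbf{Construction of $\phi$.} At a point $[E_G]\in \mathrm{M}_G^{s,\delta}$ the tangent space is $H^1(X,\ad(E_G))$, so by Serre duality the fibre of the cotangent bundle is $H^1(X,\ad(E_G))^{*}\cong H^0(X,\ad(E_G)^{\vee}\otimes\Omega_X^1)$. Since $G$ is reductive, a non-degenerate $G$--invariant form on $\mathfrak{g}$ yields $\ad(E_G)\cong\ad(E_G)^{\vee}$, so this fibre is canonically $H^0(X,\ad(E_G)\otimes\Omega_X^1)$, exactly the space of Higgs fields on $E_G$; the integrability condition $\theta\wedge\theta=0$ is automatic since $\Omega_X^2=0$ on a curve. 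Thus a point of $T^*\mathrm{M}_G^{s,\delta}$ is a pair $(E_G,\theta)$ with $E_G$ stable, and any such pair is a stable Higgs bundle, because every $\theta$--compatible parabolic reduction is in particular a reduction of $E_G$ and stability of $E_G$ already forces the strict degree inequality. First I would check that $\phi(E_G,\theta)=[(E_G,\theta)]$ is a morphism whose image is the open set of stable Higgs bundles with stable underlying bundle, and that comparing the deformation complex $\ad(E_G)\xrightarrow{[\theta,\,\cdot\,]}\ad(E_G)\otimes\Omega_X^1$ with $T(T^*\mathrm{M}_G^{s,\delta})=H^1(\ad(E_G))\oplus H^0(\ad(E_G)\otimes\Omega_X^1)$ shows $d\phi$ is an isomorphism; being bijective onto its image, $\phi$ is then an open embedding.

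\textbf{The codimension estimate.} The complement $Z:=\mathrm{M}_{G,\Higgs}^{s,\delta}\setminus\phi(T^*\mathrm{M}_G^{s,\delta})$ consists of stable Higgs bundles $(E_G,\theta)$ whose underlying bundle $E_G$ is not stable. I would stratify $Z$ by the conjugacy class of a maximal parabolic $P\subset G$ to which $E_G$ admits a canonical reduction, together with the slope data: the \emph{unstable} strata, where the associated degree is strictly positive, and the \emph{strictly semistable} boundary stratum, where it vanishes. On each stratum the forgetful map $(E_G,\theta)\mapsto E_G$ realises the stratum as an open subset of a bundle of Higgs fields over the corresponding family of reduced bundles: stability of $(E_G,\theta)$ forces $\theta$ \emph{not} to preserve the reduction, so the fibre is the open complement of the proper subspace $H^0(X,\ad(E_P)\otimes\Omega_X^1)$ inside $H^0(X,\ad(E_G)\otimes\Omega_X^1)$, hence of full dimension $h^0(\ad(E_G)\otimes\Omega_X^1)=h^1(\ad(E_G))=h^0(\ad(E_G))+(g-1)\dim G$. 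Adding the dimension of the base, subtracting the dimension of the orbit of the stabilising Higgs field under $\mathrm{Aut}(E_G)$, and comparing with $\dim\mathrm{M}_{G,\Higgs}^{s,\delta}=2\dim\mathrm{M}_G^{s,\delta}$ gives the codimension of each stratum as a linear function of $g$ and the root data attached to $P$.

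\textbf{Where the difficulty lies.} The unstable strata are harmless: there the positive associated degree contributes an extra positive multiple of the instability, pushing the codimension to at least $3$ for every $g\ge 2$. The hard part will be the strictly semistable boundary stratum coming from a maximal parabolic whose semisimple rank-one quotient is of type $A_1$. In the model case $G=\mathrm{SL}(2,\mathbb{C})$ this stratum consists of stable Higgs bundles on an extension $0\to L\to E\to L^{-1}\to 0$ with $\deg L=0$ and nonzero off-diagonal component in $H^0(L^{-2}\otimes\Omega_X^1)$, and the count above gives codimension exactly $g-1$. Hence the codimension is $\ge 2$ as soon as $g\ge 3$, but it drops to $1$ at $g=2$; for general $G$ this borderline stratum has codimension $g-1$ precisely when $G$ admits a nontrivial surjection onto $\mathrm{PGL}(2,\mathbb{C})$, which is exactly the situation removed by hypothesis $(*)$. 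I expect the bulk of the argument to be this single extremal dimension count, after which $\mathrm{codim}(Z)\ge 2$ follows and Lemma~\ref{lem-3} transports $\Pic$ and $\pi_1$ across $\phi$.
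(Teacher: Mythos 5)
Your construction of $\phi$ coincides with the paper's: at a stable $E_G$ one identifies the cotangent fibre $H^1(X,\ad(E_G))^*$ with $H^0(X,\ad(E_G)\otimes\Omega_X^1)$ via Serre duality together with a $G$--invariant isomorphism $\mathfrak{g}\cong\mathfrak{g}^*$ (reductivity), and stability of $E_G$ forces stability of $(E_G,\theta)$ for every $\theta$; your extra check that $d\phi$ is an isomorphism via the hypercohomology of $\ad(E_G)\xrightarrow{[\theta,\cdot]}\ad(E_G)\otimes\Omega_X^1$ is a reasonable elaboration of the paper's terse ``this defines an open embedding'' (note the tangent space is the hypercohomology $\mathbb{H}^1$, of which your direct sum is only the associated graded). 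Where you genuinely diverge is the codimension bound: the paper does no stratification at all, but simply cites \cite[Theorem II.6~(iii), p.~534]{F}, which is verbatim the statement that the complement has codimension $\geq 2$ under hypothesis $(*)$. Your plan amounts to reproving Faltings' estimate by a Harder--Narasimhan-type stratification, and its skeleton is sound: your extremal count is correct ($\mathrm{SL}_2$, strictly semistable boundary: $g$ parameters for $L\in\Pic^0$, $g-2$ for the extension class mod scalars, $3g-3$ Higgs fields, total $5g-5=6g-6-(g-1)$), and for general $G$ the boundary stratum attached to a maximal parabolic $P$ contributes codimension on the order of $(g-1)\dim(\mathfrak{g}/\mathfrak{p})$, minimized exactly when a simple factor has type $A_1$, i.e., when $G$ surjects onto $\mathrm{PGL}_2(\mathbb{C})$ --- so your analysis correctly explains \emph{why} $(*)$ is the right hypothesis, which the paper leaves buried in the citation. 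Three caveats if you execute this: (i) your claim that unstable strata have codimension $\geq 3$ for all $g\geq 2$ is not uniform across topological types --- when the destabilizing degree contributes only its minimal fractional amount the count can drop to $g$, which is exactly $2$ at genus $2$; this still suffices for the proposition but the inequality should be stated as such; (ii) non-preservation of the canonical reduction by $\theta$ is necessary but not sufficient for stability of the pair, so your fibre description is a containment and yields only an upper bound on stratum dimension --- which is all you need, but phrase it that way; (iii) the bookkeeping over families (quotients by automorphisms, jumping of $h^0(\ad(E_G))$) requires the standard boundedness care. In sum: your route buys a self-contained proof and a transparent role for $(*)$, at the cost of essentially redoing the cited theorem of Faltings, which is the paper's one-line road.
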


\begin{proof}
	Let $z \in \mathrm{M}_{G}^{s, \delta}(\mathbb{C})$ be a closed point of the moduli space represented 
	by a stable principal $G$--bundle $E_G$ on $X$ of topological type $\delta$. 
	Since $G$ is reductive, there an isomorphism of $G$--modules 
	$\mathfrak{g} \stackrel{\simeq}{\longrightarrow} \mathfrak{g}^*$. This gives an isomorphism of the 
	adjoint vector bundle $\ad(E_G)$ with its dual $\ad(E_G)^*$. It follows from the 
	deformation theory that the tangent space of $\mathrm{M}_{G}^{s, \delta}$ at $z$ is given by 
	$$T_{z}(\mathrm{M}_{G}^{s, \delta}) \cong H^1(X, \ad(E_G)) \cong H^0(X, \ad(E_G)\otimes\Omega_X^1)^*\,,$$ 
	where the second isomorphism is given by Serre duality. So $T^*_{z}(\mathrm{M}_{G}^{s, \delta})$ 
	is isomorphic to the space of all Higgs fields on $E_G$. 
	Therefore, a closed point of $T^*(\mathrm{M}_{G}^{s, \delta})$ is given by a pair $(E_G, \theta)$, for 
	some $E_G \in \mathrm{M}_{G}^{s, \delta}(\mathbb{C})$ and $\theta \in H^0(X, \ad(E_G)\otimes\Omega_X^1)$. 
	This defines an open embedding 
	\begin{equation}\label{eq-1}
		\phi : T^*(\mathrm{M}_{G}^{s, \delta}) \hookrightarrow \mathrm{M}_{G, \Higgs}^{s, \delta}\,,
	\end{equation} 
	where $T^*(\mathrm{M}_{G}^{s, \delta})$ is the cotangent bundle of $\mathrm{M}_{G}^{s, \delta}$. 
	It follows from \cite[Theorem II.6 (iii), p.~534]{F} that 
	$\mathrm{M}_{G, \Higgs}^{s, \delta}\setminus \phi(T^*(\mathrm{M}_{G}^{s, \delta}))$ has codimension 
	$\geq 2$ in $\mathrm{M}_{G, \Higgs}^{s, \delta}$ (here we are using the condition $(*)$). 
\end{proof}

\begin{theorem}\label{thm-1}
	There are isomorphisms 
	$$\pi_1(\mathrm{M}_{G}^{s, \delta}) \cong \pi_1(\mathrm{M}_{G, \Higgs}^{s, \delta})\,\, 
	\text{ and }\,\, 
	\Pic(\mathrm{M}_{G}^{s, \delta}) \cong \Pic(\mathrm{M}_{G, \Higgs}^{s, \delta})\,.$$ 
\end{theorem}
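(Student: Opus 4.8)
The plan is to use the cotangent bundle $T^*\mathrm{M}_G^{s,\delta}$ as a bridge between the two moduli spaces, chaining together Proposition~\ref{prop-1} with Lemmas~\ref{lem-1}, \ref{lem-2}, and \ref{lem-3}. First I would regard $T^*\mathrm{M}_G^{s,\delta}$ as a vector bundle over the smooth quasi-projective variety $\mathrm{M}_G^{s,\delta}$ via the canonical projection $p$. Lemma~\ref{lem-1} then gives an isomorphism $p^*\colon \Pic(\mathrm{M}_G^{s,\delta}) \stackrel{\simeq}{\longrightarrow} \Pic(T^*\mathrm{M}_G^{s,\delta})$, and Lemma~\ref{lem-2} gives an isomorphism $p_*\colon \pi_1(T^*\mathrm{M}_G^{s,\delta}) \stackrel{\simeq}{\longrightarrow} \pi_1(\mathrm{M}_G^{s,\delta})$.

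Next I would invoke the open embedding $\phi$ of Proposition~\ref{prop-1}. Writing $U := \phi(T^*\mathrm{M}_G^{s,\delta})$, the map $\phi$ is an isomorphism onto $U$, hence induces isomorphisms $\Pic(U) \cong \Pic(T^*\mathrm{M}_G^{s,\delta})$ and $\pi_1(U) \cong \pi_1(T^*\mathrm{M}_G^{s,\delta})$. By the same proposition, the complement of $U$ in $\mathrm{M}_{G,\Higgs}^{s,\delta}$ has codimension at least $2$, and $\mathrm{M}_{G,\Higgs}^{s,\delta}$ is smooth quasi-projective (in particular normal and locally factorial). Therefore Lemma~\ref{lem-3} applies with $Z = \mathrm{M}_{G,\Higgs}^{s,\delta}$ and the open subset $U$, yielding $\Pic(\mathrm{M}_{G,\Higgs}^{s,\delta}) \stackrel{\simeq}{\longrightarrow} \Pic(U)$ and $\pi_1(U) \stackrel{\simeq}{\longrightarrow} \pi_1(\mathrm{M}_{G,\Higgs}^{s,\delta})$.

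Finally I would compose these chains of isomorphisms. For the Picard groups this reads
\[ \Pic(\mathrm{M}_G^{s,\delta}) \xrightarrow{\;p^*\;} \Pic(T^*\mathrm{M}_G^{s,\delta}) \cong \Pic(U) \xleftarrow{\;\iota^*\;} \Pic(\mathrm{M}_{G,\Higgs}^{s,\delta}), \]
and for the fundamental groups the analogous chain runs through $\pi_1(U)$ and $\pi_1(T^*\mathrm{M}_G^{s,\delta})$. Reversing the arrows that happen to point the wrong way (all being isomorphisms) produces the two asserted isomorphisms.

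As for the main difficulty: essentially all of the substantive geometric content — the Hitchin-type cotangent description of $\phi$ and the codimension estimate for the complement of its image — is already packaged inside Proposition~\ref{prop-1}, so the argument above is purely formal bookkeeping. The only points requiring genuine care are to confirm the hypotheses of Lemma~\ref{lem-3} on the Higgs side, namely that $\mathrm{M}_{G,\Higgs}^{s,\delta}$ is smooth (so that both the Picard-group purity of \cite{Ha2} and the $\pi_1$-purity of \cite{Mi} are available) and that the codimension bound coming from Proposition~\ref{prop-1} is truly $\geq 2$ and not merely $\geq 1$; the latter is precisely the place where hypothesis $(*)$ is indispensable.
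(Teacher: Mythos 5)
Your proposal is correct and follows essentially the same route as the paper's own proof: chain the open embedding $\phi$ of Proposition~\ref{prop-1} with the codimension-$2$ purity statements of Lemma~\ref{lem-3} on the Higgs side, and the vector-bundle isomorphisms for $p\colon T^*\mathrm{M}_G^{s,\delta} \to \mathrm{M}_G^{s,\delta}$ on the other side. In fact your citation bookkeeping is slightly more careful than the paper's, which cites Lemma~\ref{lem-2} for the Picard-group isomorphism \eqref{eq-5} where Lemma~\ref{lem-1} is clearly intended.
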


\begin{proof}
	Consider the open embedding $\phi$ in the Proposition \ref{prop-1}. 
	Then by Lemma \ref{lem-3}, we have isomorphisms 
	\begin{equation}\label{eq-2}
		\pi_1(T^*(\mathrm{M}_{G}^{s,\delta})) \cong \pi_1(\mathrm{M}_{G, \Higgs}^{s, \delta})\,,
	\end{equation} 
	and
	\begin{equation}\label{eq-3}
		\Pic(T^*\mathrm{M}_{G}^{s, \delta}) \cong \Pic(\mathrm{M}_{G, \Higgs}^{s, \delta})\,.
	\end{equation}
	Since $p : T^*\mathrm{M}_{G}^{s, \delta} \longrightarrow \mathrm{M}_{G}^{s, \delta}$ is a vector 
	bundle over a smooth base $\mathrm{M}_{G}^{s, \delta}$, from Lemma \ref{lem-2} we have an isomorphism 
	\begin{equation}\label{eq-4}
		\pi_1(\mathrm{M}_{G}^{s, \delta}) \cong \pi_1(T^*\mathrm{M}_{G}^{rs, \delta})\,.
	\end{equation} 
	Similarly, from Lemma \ref{lem-2}, we have an isomorphism of Picard groups 
	\begin{equation}\label{eq-5}
		p^* : \Pic(\mathrm{M}_{G}^{s, \delta}) \longrightarrow \Pic(T^*\mathrm{M}_{G}^{s, \delta})\,.
	\end{equation} 
	Then from \eqref{eq-2} and \eqref{eq-4}, we have an isomorphism 
	$\pi_1(\mathrm{M}_{G}^{s, \delta}) \cong \pi_1(\mathrm{M}_{G, \Higgs}^{s, \delta})\,.$ 
	Similarly, combining \eqref{eq-3} and \eqref{eq-5}, we have 
	$\Pic(\mathrm{M}_{G}^{s, \delta}) \cong \Pic(\mathrm{M}_{G, \Higgs}^{s, \delta})\,.$ 
\end{proof}

\begin{corollary}\label{cor-1}
	There are isomorphisms 
	$$\pi_1(\mathrm{M}_{G}^{\delta}) \cong \pi_1(\mathrm{M}_{G, \Higgs}^{\delta})\,\, 
	\textnormal{ and }\,\, 
	\Pic(\mathrm{M}_{G}^{\delta}) \cong \Pic(\mathrm{M}_{G, \Higgs}^{\delta})\,. $$
\end{corollary}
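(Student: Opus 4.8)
The plan is to sandwich Theorem \ref{thm-1} between two applications of the codimension-two comparison of Lemma \ref{lem-3}, passing on both the bundle and the Higgs side from the semistable moduli space to its stable locus. For the Picard groups this means assembling the chain
\begin{equation*}
\Pic(\mathrm{M}_{G}^{\delta}) \xrightarrow{\;\sim\;} \Pic(\mathrm{M}_{G}^{s, \delta}) \xrightarrow{\;\sim\;} \Pic(\mathrm{M}_{G, \Higgs}^{s, \delta}) \xrightarrow{\;\sim\;} \Pic(\mathrm{M}_{G, \Higgs}^{\delta})\,,
\end{equation*}
in which the middle arrow is Theorem \ref{thm-1} and the two outer arrows are instances of Lemma \ref{lem-3}; the identical chain with $\pi_1$ replacing $\Pic$ yields the fundamental group statement. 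Everything therefore reduces to verifying, for the open inclusions $\mathrm{M}_{G}^{s, \delta} \hookrightarrow \mathrm{M}_{G}^{\delta}$ and $\mathrm{M}_{G, \Higgs}^{s, \delta} \hookrightarrow \mathrm{M}_{G, \Higgs}^{\delta}$, the hypothesis of Lemma \ref{lem-3}: that each complementary strictly semistable locus has codimension at least $2$.

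On the bundle side this input is already available. Under the hypothesis $(*)$ the strictly semistable locus $\mathrm{M}_{G}^{\delta} \setminus \mathrm{M}_{G}^{s, \delta}$ has codimension at least $2$, which is exactly the estimate underlying the computation of $\pi_1(\mathrm{M}_{G}^{\delta})$ in \cite{BMP} and of $\Pic(\mathrm{M}_{G}^{\delta})$ in \cite{KN, BLS}; I would cite it, observing that the excluded case (genus $2$ together with a surjection of $G$ onto $\mathrm{PGL}(2, \mathbb{C})$) is precisely where this codimension can drop to $1$. On the Higgs side I must prove the analogous bound for $\mathrm{M}_{G, \Higgs}^{\delta} \setminus \mathrm{M}_{G, \Higgs}^{s, \delta}$. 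Here I expect to use the deformation-theoretic estimates of Faltings already invoked in Proposition \ref{prop-1}, that is \cite[Theorem II.6]{F}, combined with a stratification of the strictly semistable locus by the Jordan--H\"older type of the Higgs bundle and a dimension count on each stratum.

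Because $\mathrm{M}_{G}^{\delta}$ and $\mathrm{M}_{G, \Higgs}^{\delta}$ are normal but in general singular, I would invoke Lemma \ref{lem-3} in the form valid for normal varieties rather than smooth ones; the results underlying it, \cite[Proposition 1.6]{Ha2} and \cite[p.~42]{Mi}, hold in this generality. For the fundamental-group half only normality is needed, whereas for the Picard-group half one genuinely needs local factoriality of the ambient space along the excised locus: without it the restriction map on Picard groups can fail to be surjective even across a codimension-two subset. The required factoriality of the moduli spaces is part of the structure theory in \cite{KN, BLS}.

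The step I expect to be the main obstacle is the Higgs-side codimension estimate. In contrast to the bundle case, the strictly semistable locus of $\mathrm{M}_{G, \Higgs}^{\delta}$ is not simply the preimage of the strictly semistable bundle locus under the underlying-bundle map, so the bound cannot be transported from the bundle statement and must be established intrinsically on the Higgs moduli space. I anticipate that the doubling of dimensions in the cotangent (Higgs) directions makes the estimate go through under the same hypothesis $(*)$, but verifying it uniformly for an arbitrary connected reductive $G$, and securing the local-factoriality input required on the Picard side, is the delicate part of the argument.
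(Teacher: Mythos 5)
Your proposal matches the paper's proof exactly — the same three-step chain sandwiching Theorem \ref{thm-1} between two applications of Lemma \ref{lem-3}, with the bundle-side codimension bound coming from \cite[Proposition 3.4]{KN} and the Higgs-side bound taken directly from \cite[Theorem II.6]{F}, so the stratification and dimension count you anticipate as the main obstacle is unnecessary: Faltings' theorem already supplies the estimate, and the paper simply cites it. Your caveat about needing local factoriality (not just normality) for the Picard half of Lemma \ref{lem-3} on the singular moduli spaces is a genuine subtlety that the paper passes over silently (the lemma is stated under a standing smoothness hypothesis on $Z$ yet applied to the singular spaces $\mathrm{M}_{G}^{\delta}$ and $\mathrm{M}_{G, \Higgs}^{\delta}$), though your claim that the required factoriality is established in \cite{KN, BLS} is too optimistic — \cite{BLS} in fact show local factoriality fails for many semisimple groups.
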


\begin{proof}
	It follows from \cite[Proposition 3.4]{KN}\label{prop-2} and openness of stability that 
	$\mathrm{M}_{G}^{s, \delta}$ is a smooth open subscheme of $\mathrm{M}_{G}^{\delta}$, and the 
	complement $\mathrm{M}_{G}^{\delta}\setminus \mathrm{M}_{G}^{s, \delta}$ has codimension $\geq 2$. 
	Then from Lemma \ref{lem-3}, we have isomorphisms 
	\begin{equation}\label{eq-6}
		\pi_1(\mathrm{M}_{G}^{s, \delta}) \cong \pi_1(\mathrm{M}_{G}^{\delta})\,\, 
		\text{ and }\,\, 
		\Pic(\mathrm{M}_{G}^{s, \delta}) \cong \Pic(\mathrm{M}_{G}^{\delta})\,.
	\end{equation} 
	Similarly, $\mathrm{M}_{G, \Higgs}^{\delta}$ contains $\mathrm{M}_{G, \Higgs}^{s, \delta}$ as a 
	smooth open subscheme such that the complement 
	$\mathrm{M}_{G, \Higgs}^{\delta}\setminus \mathrm{M}_{G, \Higgs}^{s, \delta}$ has codimension $\geq 2$ 
	(see \cite[Theorem II.6]{F}). 
	Then from Lemma \ref{lem-3}, we have isomorphisms 
	\begin{equation}\label{eq-7}
	\pi_1(\mathrm{M}_{G, \Higgs}^{s, \delta}) \cong \pi_1(\mathrm{M}_{G, \Higgs}^{\delta})\,\, 
	\text{ and }\,\, 
	\Pic(\mathrm{M}_{G, \Higgs}^{s, \delta}) \cong \Pic(\mathrm{M}_{G, \Higgs}^{\delta})\,. 
	\end{equation} 
	Then the corollary follows from Theorem \ref{thm-1} and the above isomorphisms in \eqref{eq-6} and \eqref{eq-7}. 
\end{proof}

\section*{Acknowledgement}
	We would like to thank Prof. Indranil Biswas for helpful discussions in preparation of this article.

\end{document}